\newtheorem{theorem}{Theorem}[section]
\newtheorem{proposition}[theorem]{Proposition}
\newtheorem{definition}[theorem]{Definition}
\newtheorem{lemma}[theorem]{Lemma}
\newtheorem{corollary}[theorem]{Corollary}
\newtheorem*{remark}{Remark}
\begin{document}
\title[Some characterizations of almost limited sets and applications]{Some
characterizations of almost limited sets\\
and applications}
\author{Nabil Machrafi}
\address[N. Machrafi]{Universit\'{e} Ibn Tofail, Facult\'{e} des Sciences, D%
\'{e}partement de Math\'{e}\-matiques, B.P. 133, K\'{e}nitra, Morocco.}
\email{nmachrafi@gmail.com}
\author{Aziz Elbour}
\address[A. Elbour]{Universit\'{e} Moulay Isma\"{\i}l, Facult\'{e} des
Sciences et Techniques, D\'{e}partement de Math\'{e}\-matiques, B.P. 509,
Errachidia, Morocco.}
\email{azizelbour@hotmail.com}
\author{Mohammed Moussa}
\address[M. Moussa]{Universit\'{e} Ibn Tofail, Facult\'{e} des Sciences, D%
\'{e}partement de Math\'{e}\-matiques, B.P. 133, K\'{e}nitra, Morocco.}
\email{mohammed.moussa09@gmail.com}

\begin{abstract}
Recently, J.X. Chen et al. introduced and studied the class of almost
limited sets in Banach lattices. In this paper we establish some
characterizations of almost limited sets in Banach lattices (resp. wDP*
property of Banach lattices), that generalize some results obtained by J.X.
Chen et al.. Also, we introduce and study the class of the almost limited
operators, which maps the closed unit bull of a Banach space to an almost
limited subset of a Banach lattice. Some results about the relationship
between the class of almost limited operators and that of limited (resp. L-
and M-weakly compact, resp. compact) operators are presented.
\end{abstract}

\subjclass[2010]{Primary 46B42; Secondary 46B50, 47B65}
\keywords{almost limited operator, almost limited set, the wDP$^{\ast }$
property, dual Schur property, Banach lattice.}
\maketitle

\section{Introduction}

Throughout this paper $X,$ $Y$ will denote real Banach spaces, and $E,\,F$
will denote real Banach lattices. $B_{X}$ is the closed unit ball of $X$.

Let us recall that a norm bounded subset $A$ of a Banach space $X$ is called 
\emph{limited} \cite{BD}, if every weak$^{\ast }$ null sequence $\left(
f_{n}\right) $ in $X^{\ast }$ converges uniformly to zero on $A$, that is, $%
\sup_{x\in A}\left\vert f_{n}\left( x\right) \right\vert \rightarrow 0$. We
know that every relatively compact subset of $X$ is limited. Recently, J.X.
Chen et al. \cite{chen} introduced and studied the class of almost limited
sets in Banach lattices. A norm bounded subset $A$ of a Banach lattice $E$
is said to be \emph{almost limited}, if every disjoint weak$^{\ast }$ null
sequence $(f_{n})$ in $E^{\ast }$ converges uniformly to zero on $A$.

The aim of this paper is to establish some characterizations of almost
limited sets (resp. wDP$^{\ast }$\ property). As consequence, we give a
generalization of Theorems 2.5 and 3.2 of \cite{chen} (see Corollary \ref%
{corT(A)almostlimited} and Theorem \ref{bisequence}). After that, we
introduce and study the \emph{dual Schur property} in Banach lattices (Sect.
3). Finally, using the almost limited sets, we define a new class of
operators so-called \emph{almost limited} (definition \ref{def1}), and we
establish some relationship between the class of almost limited operators
and that of limited (resp. L- and M-weakly compact, resp. compact) operators
(Sect. 4). Our terminology and notations are standard--we follow by \cite%
{AB3, WN4}.

\section{Almost limited sets in Banach lattices}

Recall that the lattice operations of $E^{\ast }$\ are said to be weak$%
^{\ast }$ sequentially continuous whenever $f_{n}\overset{w^{\ast }}{%
\rightarrow }0$ in $E^{\ast }$ implies $\left\vert f_{n}\right\vert \overset{%
w^{\ast }}{\rightarrow }0$ in $E^{\ast }$.

An order interval of a Banach Lattice $E$ is not necessary almost limited
(resp. limited). In fact, the order interval $\left[ -\mathbf{1,1}\right] $\
of the Banach lattice $c$ is not almost limited (and hence not limited),
where $\mathbf{1}:=\left( 1,1,...\right) \in c$ \cite[Remark 2.4(1)]{chen}.

The next proposition characterizes the Banach lattices on which every order
interval is almost limited (resp. limited).

\begin{proposition}
\label{order interval}For a Banach lattice $E$ the following statements hold:

\begin{enumerate}
\item Every order interval of $E$ is almost limited if and only if $%
\left\vert f_{n}\right\vert \overset{w^{\ast }}{\rightarrow }0$ for each
disjoint weak$^{\ast }$ null sequence $\left( f_{n}\right) $ in $E^{\ast }$.

\item Every order interval of $E$ is limited if and only if the lattice
operations of $E^{\ast }$ are weak$^{\ast }$ sequentially continuous.
\end{enumerate}
\end{proposition}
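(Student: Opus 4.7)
The plan is to reduce both statements to the classical Riesz–Kantorovich-type identity
\[
\sup_{y\in[-x,x]}|f(y)| \;=\; |f|(x) \qquad (x\in E_+,\; f\in E^{\ast}),
\]
after which each equivalence becomes a matter of re-expressing weak$^{\ast}$ convergence of the positive functionals $|f_{n}|$ in terms of their values on positive elements.

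For (1), I would first observe that a norm bounded set $A\subseteq E$ is almost limited exactly when $\sup_{y\in A}|f_{n}(y)|\to 0$ for every disjoint weak$^{\ast}$ null sequence $(f_{n})\subseteq E^{\ast}$. Applying this to $A=[-x,x]$ with $x\in E_+$ and using the identity above, the order interval $[-x,x]$ is almost limited if and only if $|f_{n}|(x)\to 0$ for every such $(f_{n})$. Thus every order interval of $E$ is almost limited iff $|f_{n}|(x)\to 0$ for all $x\in E_+$ and all disjoint weak$^{\ast}$ null $(f_{n})$. Since the $|f_{n}|$ are positive functionals and every $x\in E$ decomposes as $x=x^{+}-x^{-}$, convergence on $E_+$ is equivalent to convergence on all of $E$; this yields $|f_{n}|\xrightarrow{w^{\ast}}0$, and the converse is immediate (note also that $A\subseteq[-x,x]$ implies $\sup_{y\in A}|f_{n}(y)|\le|f_{n}|(x)$, so almost limitedness of the interval follows from $|f_{n}|\xrightarrow{w^{\ast}}0$).

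For (2), the argument is literally the same, simply dropping the word \emph{disjoint}: $[-x,x]$ is limited iff $|f_{n}|(x)\to 0$ for every weak$^{\ast}$ null sequence $(f_{n})\subseteq E^{\ast}$, and this holds for all $x\in E_+$ iff $|f_{n}|\xrightarrow{w^{\ast}}0$ for all such $(f_{n})$, which is exactly the weak$^{\ast}$ sequential continuity of the lattice operations of $E^{\ast}$.

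There is no real obstacle in this proof; the only care needed is in the reduction of weak$^{\ast}$ convergence of the positive sequence $(|f_{n}|)$ from $E$ to $E_+$, which is handled by the Riesz decomposition $x=x^{+}-x^{-}$. Everything else is a direct application of the formula $\sup_{|y|\le x}|f(y)|=|f|(x)$ and the definitions of (almost) limited sets.
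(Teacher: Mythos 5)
Your proof is correct and is essentially the paper's argument: the authors simply cite Theorem 3.55 of Aliprantis--Burkinshaw, whose content is exactly the identity $\sup_{|y|\le x}|f(y)|=|f|(x)$ (equivalently, uniform convergence of $(f_n)$ on $[-x,x]$ iff $|f_n|(x)\to 0$) that you spell out, together with the reduction from $E_+$ to $E$ via $x=x^{+}-x^{-}$. The only detail worth adding is that an arbitrary order interval $[a,b]$ sits inside the symmetric interval $[-u,u]$ with $u=|a|+|b|$, so treating symmetric intervals suffices, which your remark $\sup_{y\in A}|f_n(y)|\le |f_n|(u)$ for $A\subseteq[-u,u]$ already covers.
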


\begin{proof}
Is a simple consequence of the Theorem 3.55 of \cite{AB3}.
\end{proof}

It should be noted, by Proposition 1.4 of \cite{WN2012} or Lemma 2.2 of \cite%
{chen}, that if $E$ is a $\sigma $-Dedekind complete Banach lattice then it
satisfies the following property: 
\begin{equation}
f_{m}\perp f_{k}\text{ in }E^{\ast }\text{ and }f_{n}\overset{w^{\ast }}{%
\rightarrow }0\text{ implies }\left\vert f_{n}\right\vert \overset{w^{\ast }}%
{\rightarrow }0.  \tag{d}
\end{equation}%
So, in a $\sigma $-Dedekind complete Banach lattice every order interval is
almost limited.

It is worth to note that the property $(\text{d})$ does not characterize the 
$\sigma $-Dedekind complete Banach lattices. In fact, the Banach lattice $%
l^{\infty }/c_{0}$\ has the property $(\text{d})$ but it is not $\sigma $%
-Dedekind complete \cite[Remark 1.5]{WN2012}.

Also, clearly if the lattice operations of $E^{\ast }$ are weak$^{\ast }$
sequentially continuous then $E$ has the property $(\text{d})$. The converse
is false in general. In fact, the Dedekind complete Banach lattice $%
l^{\infty }$ has the property $(\text{d})$ but the lattice operations of $%
\left( l^{\infty }\right) ^{\ast }$ are not weak$^{\ast }$ sequentially
continuous.

Recall that a subset $A$ of a Banach lattice $E$ is said to be \emph{almost
order bounded} if for every $\epsilon >0$ there exists some $u\in E^{+}$
such that $A\subseteq \left[ -u,u\right] +\epsilon B_{E}$, equivalently, if
for every $\epsilon >0$ there exists some $u\in E^{+}$ such that $\| \left(
\left\vert x\right\vert -u\right) ^{+}\| \leq \epsilon $ for all $x\in A$.

To prove that the order intervals can be replaced by the almost order
bounded subsets of $E$ in the Proposition \ref{order interval}, we need the
following Lemma.

\begin{lemma}
\label{lemme1}Let $A$\ be a norm bounded subset of a Banach lattice $E$. If
for every $\varepsilon >0$\ there exists some limited (resp. almost limited)
subset $A_{\varepsilon }$\ of $E$\ such that $A\subseteq A_{\varepsilon
}+\varepsilon B_{E}$, then $A$\ is limited (resp. almost limited).
\end{lemma}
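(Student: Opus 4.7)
The plan is a standard $\varepsilon/2$ approximation argument, exploiting the fact that the relevant test sequences in $E^{\ast}$ are norm bounded.

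First I would fix a weak$^{\ast}$ null sequence $(f_n)$ in $E^{\ast}$ (required to be disjoint in the almost limited case) and observe that by the uniform boundedness principle the quantity $M:=\sup_n\|f_n\|$ is finite. The goal is to prove $\sup_{x\in A}|f_n(x)|\to 0$.

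Given $\varepsilon>0$, I would set $\delta:=\varepsilon/(2M+1)$ and invoke the hypothesis to obtain a limited (resp.\ almost limited) subset $A_{\delta}\subseteq E$ with $A\subseteq A_{\delta}+\delta B_{E}$. For any $x\in A$, write $x=y+z$ with $y\in A_{\delta}$ and $\|z\|\leq\delta$. Then
\[
|f_n(x)|\leq |f_n(y)|+|f_n(z)|\leq \sup_{y\in A_{\delta}}|f_n(y)|+M\delta,
\]
and taking the supremum over $x\in A$ yields
\[
\sup_{x\in A}|f_n(x)|\leq \sup_{y\in A_{\delta}}|f_n(y)|+M\delta.
\]
Since $(f_n)$ is weak$^{\ast}$ null (and disjoint, in the almost limited case), the definition of $A_{\delta}$ being limited (resp.\ almost limited) gives $\sup_{y\in A_{\delta}}|f_n(y)|\to 0$. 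Hence $\limsup_n\sup_{x\in A}|f_n(x)|\leq M\delta<\varepsilon$, and letting $\varepsilon\to 0$ finishes the proof.

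The argument is essentially routine; the only conceptual point worth flagging is that the disjointness of $(f_n)$ is \emph{not} disturbed by the decomposition $x=y+z$, because we never split or modify the sequence in $E^{\ast}$, so the same proof covers both the limited and the almost limited case with no change beyond replacing ``weak$^{\ast}$ null'' by ``disjoint weak$^{\ast}$ null''. Thus there is no real obstacle: the estimate is entirely internal to $E$, and the boundedness $M<\infty$ is what allows the error term $M\delta$ to be absorbed into $\varepsilon$.
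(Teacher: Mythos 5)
Your proof is correct and follows essentially the same $\varepsilon/2$ approximation argument as the paper: both bound $\sup_{x\in A}\left\vert f_{n}(x)\right\vert$ by $\sup_{y\in A_{\delta}}\left\vert f_{n}(y)\right\vert$ plus an error controlled by the uniform norm bound on $(f_{n})$, and then use that $A_{\delta}$ is limited (resp. almost limited). The only cosmetic difference is that you phrase the conclusion via a $\limsup$ while the paper fixes an $n_{0}$; the substance is identical.
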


\begin{proof}
Let $(f_{n})$ be a weak$^{\ast }$ null (resp. disjoint weak$^{\ast }$ null)
sequence in $E^{\ast }$, let $\varepsilon >0$. Pick some $M>0$ with $%
\left\Vert f_{n}\right\Vert \leq M$ for all $n$. By hypothesis, there exists
some limited (resp. almost limited) subset $A_{\varepsilon }$ of $E$ such
that $A\subseteq A_{\varepsilon }+\frac{\varepsilon }{2M}B_{E}$, and hence $%
\sup_{x\in A}\left\vert f_{n}(x)\right\vert \leq \sup_{x\in A_{\varepsilon
}}\left\vert f_{n}(x)\right\vert +\frac{\varepsilon }{2}$.

On the other hand, as $A_{\varepsilon }$ is limited (resp. almost limited),
there exists some $n_{0}$ with $\sup_{x\in A_{\varepsilon }}\left\vert
f_{n}(x)\right\vert <\frac{\varepsilon }{2}$ for all $n\geq n_{0}$.

Thus, $\sup_{x\in A}\left\vert f_{n}(x)\right\vert \leq \varepsilon $\ for
all $n\geq n_{0}$. This implies $\sup_{x\in A}\left\vert f_{n}(x)\right\vert
\rightarrow 0$, and then $A$\ is limited (resp. almost limited).
\end{proof}

Now, the following result is a simple consequence of Proposition \ref{order
interval} and Lemma \ref{lemme1}.

\begin{proposition}
\label{almostorderbounded}For a Banach lattice $E$ the following statements
hold:

\begin{enumerate}
\item The lattice operations of $E^{\ast }$ are weak$^{\ast }$ sequentially
continuous if and only if every almost order bounded subset of $E$ is
limited.

\item $E$ has the property $(d)$ if and only if every almost order bounded
subset of $E$ is almost limited.
\end{enumerate}
\end{proposition}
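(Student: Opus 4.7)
The plan is to derive both equivalences directly from Proposition \ref{order interval} together with Lemma \ref{lemme1}; items (1) and (2) are strictly parallel, differing only in whether one uses ``limited'' or ``almost limited'' and, correspondingly, the hypothesis on $E^{\ast}$ (weak$^{\ast}$ sequential continuity of the lattice operations versus property $(\text{d})$). I therefore carry out one argument and indicate in a single phrase how the other follows.

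For the forward implication of (2), I would assume that $E$ has property $(\text{d})$. By Proposition \ref{order interval}(1), every order interval $[-u,u]$ of $E$ is then almost limited. Now let $A\subseteq E$ be almost order bounded and fix $\varepsilon>0$; by definition there exists $u\in E^{+}$ such that $A\subseteq [-u,u]+\varepsilon B_{E}$. Applying Lemma \ref{lemme1} with $A_{\varepsilon}:=[-u,u]$ (which is almost limited) yields that $A$ itself is almost limited. The forward implication of (1) is the word-for-word analogue, invoking Proposition \ref{order interval}(2) and the ``limited'' version of Lemma \ref{lemme1}.

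For the converse directions, the key observation is that any order interval $[-u,u]$ is trivially almost order bounded: indeed, with the same $u\in E^{+}$ we have $[-u,u]\subseteq[-u,u]+\varepsilon B_{E}$ for every $\varepsilon>0$. Hence if every almost order bounded subset of $E$ is limited (resp.\ almost limited), the same holds in particular for every order interval, and Proposition \ref{order interval}(2) (resp.\ (1)) then delivers weak$^{\ast}$ sequential continuity of the lattice operations of $E^{\ast}$ (resp.\ property $(\text{d})$).

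There is no real obstacle here, as both auxiliary results have already been proved: Proposition \ref{order interval} provides the equivalence at the level of order intervals, and Lemma \ref{lemme1} promotes this from order intervals to any set approximable by limited (resp.\ almost limited) sets up to $\varepsilon$ in the norm, which is precisely the content of almost order boundedness. The only thing worth a single line of justification is the trivial remark that order intervals are almost order bounded, used in the converse directions.
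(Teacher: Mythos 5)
Your proof is correct and matches the paper's route exactly: the paper itself states this proposition as ``a simple consequence of Proposition \ref{order interval} and Lemma \ref{lemme1},'' which is precisely the combination you spell out (including the easy remark that order intervals are almost order bounded, needed for the converse directions). Nothing to add.
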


\begin{corollary}
\label{cor1}If $E$ is a $\sigma $-Dedekind complete Banach lattice then
every almost order bounded set in $E$ is almost limited.
\end{corollary}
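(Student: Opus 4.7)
The plan is to observe that Corollary \ref{cor1} follows by chaining two facts that have already been put in place in the excerpt, so no genuinely new argument is required. First I would recall the implication, stated explicitly in the paragraph just after Proposition \ref{order interval} (with reference to Proposition~1.4 of \cite{WN2012} or Lemma~2.2 of \cite{chen}), that every $\sigma$-Dedekind complete Banach lattice satisfies the property $(\mathrm{d})$: any disjoint weak$^{\ast}$ null sequence $(f_n)$ in $E^{\ast}$ has $|f_n|\overset{w^{\ast}}{\rightarrow}0$.

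Next I would invoke Proposition \ref{almostorderbounded}(2), which is the characterization that property $(\mathrm{d})$ is equivalent to every almost order bounded subset of $E$ being almost limited. Combining these two, the corollary follows immediately.

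Since the entire content of the corollary is covered by the implication \textquotedblleft$\sigma$-Dedekind complete $\Rightarrow$ property~$(\mathrm{d})$ $\Rightarrow$ almost order bounded sets are almost limited,\textquotedblright\ there is no real obstacle. The only point worth mentioning is that the corollary strictly strengthens the remark made earlier that in a $\sigma$-Dedekind complete Banach lattice every \emph{order interval} is almost limited, by enlarging the admissible class of sets from order intervals to the (generally much wider) class of almost order bounded sets; this broadening is precisely the content of Lemma \ref{lemme1}, which was used to pass from Proposition \ref{order interval} to Proposition \ref{almostorderbounded}. Accordingly, the proof reduces to a one-line citation of these two prior results.
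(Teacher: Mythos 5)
Your proposal is correct and matches the paper's (implicit) argument exactly: the corollary is obtained by combining the fact that every $\sigma$-Dedekind complete Banach lattice has property $(\mathrm{d})$ with Proposition \ref{almostorderbounded}(2). Nothing further is needed.
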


To state our next result, we need the following lemma which is just a
particular case of Theorem 2.4 of \cite{DF}.

\begin{lemma}
\label{DF}Let $E$ be a Banach lattice, and let $\left( f_{n}\right) \subset
E^{\ast }$ be a sequence with $\left\vert f_{n}\right\vert \overset{w^{\ast }%
}{\rightarrow }0$. If $A\subset E$ is a norm bounded and solid set such that 
$f_{n}(x_{n})\rightarrow 0$ for every disjoint sequence $\left( x_{n}\right)
\subset A^{+}=A\cap E^{+}$, then $\sup_{x\in A}\left\vert f_{n}\right\vert
\left( x\right) \rightarrow 0$.
\end{lemma}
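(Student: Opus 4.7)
My plan is to pass through the positive part $A^+ := A\cap E^+$ of $A$ and reach a contradiction via a disjointification argument if the desired uniform convergence fails.

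First, I would reduce the assertion to $\sup_{y\in A^+}|f_n|(y)\to 0$. This is immediate from solidity: for every $x\in A$ one has $|x|\in A^+$, and since $|f_n|$ is a positive functional, $|f_n|(x)\le |f_n|(|x|)$; hence $\sup_{x\in A}|f_n|(x)=\sup_{y\in A^+}|f_n|(y)$.

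Second, I would promote the disjoint-sequence hypothesis from $f_n$ to $|f_n|$: for every disjoint $(y_n)\subset A^+$, also $|f_n|(y_n)\to 0$. By the Riesz--Kantorovich formula $|f_n|(y)=\sup\{|f_n(z)|:|z|\le y\}$, I would choose $z_n$ with $|z_n|\le y_n$ and $|f_n(z_n)|\ge |f_n|(y_n)-1/n$. The parts $z_n^{\pm}$ satisfy $0\le z_n^{\pm}\le y_n$, so $z_n^{\pm}\in A^+$ by solidity, and each of $(z_n^+)$, $(z_n^-)$ is disjoint (being dominated by the disjoint $y_n$). The hypothesis then forces $f_n(z_n^{\pm})\to 0$, hence $|f_n|(y_n)\to 0$.

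Third, I would argue by contradiction. If $\sup_{y\in A^+}|f_n|(y)\not\to 0$, then after extraction there are $\varepsilon>0$ and $x_n\in A^+$ with $|f_n|(x_n)>\varepsilon$. Exploiting the weak$^{\ast}$ convergence $|f_n|\to 0$ evaluated on the finite positive combinations of previously selected $x_{n_j}$, I would build inductively indices $n_1<n_2<\cdots$ and a disjoint sequence $(y_k)\subset A^+$ with $y_k\le x_{n_k}$ and $|f_{n_k}|(y_k)>\varepsilon/2$. Together with Step 2 this yields the contradiction.

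The main obstacle is the disjointification in this third step. In a general Banach lattice the naive truncation $x_{n_k}-x_{n_k}\wedge\sum_{j<k}y_j$ need not be truly disjoint from the earlier $y_j$, since the identity $(a-a\wedge b)\wedge b=0$ fails in most Banach lattices. One therefore has to argue more carefully, along the lines of the proof of Theorem 2.4 of \cite{DF}, to ensure both the genuine disjointness of $(y_k)$ in $A^+$ and the lower bound $|f_{n_k}|(y_k)>\varepsilon/2$; the other two steps are essentially bookkeeping once this construction is in place.
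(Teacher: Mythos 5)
Your Steps 1 and 2 are correct and complete: solidity gives $\sup_{x\in A}|f_n|(x)=\sup_{y\in A^{+}}|f_n|(y)$, and the Riesz--Kantorovich argument with $z_n^{\pm}\le y_n$ does upgrade the hypothesis to $|f_n|(y_n)\to 0$ for every disjoint $(y_n)\subset A^{+}$. But Step 3 is the entire analytic content of the lemma, and as written it is not a proof: you assert that one can extract indices and a disjoint sequence $(y_k)\subset A^{+}$ with $y_k\le x_{n_k}$ and $|f_{n_k}|(y_k)>\varepsilon/2$, you correctly observe that the naive truncations fail, and then you defer the actual construction to ``the lines of the proof of Theorem 2.4 of \cite{DF}.'' The obstruction you identify is real: the standard disjointification $\bigl(x_{k+1}-4^{k}\sum_{j\le k}x_{j}-2^{-k}u\bigr)^{+}$ needs a single dominating element $u$, and $A$ is only norm bounded, not order bounded, so the weak$^{\ast}$ convergence of $|f_n|$ on finitely many previously chosen elements does not by itself yield genuine disjointness. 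Diagnosing the obstacle is not the same as overcoming it, so as a standalone argument the proposal has a genuine gap exactly at its decisive step.

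That said, the gap sits precisely where the paper itself outsources the work: the paper offers no proof beyond the remark that the lemma \emph{is} a particular case of Theorem 2.4 of \cite{DF} (the hypothesis $|f_n|\overset{w^{\ast}}{\to}0$ supplies the pointwise condition required there, and your Step 2 observation is subsumed in that theorem). So if citing Dodds--Fremlin is admissible, the economical route is the paper's: invoke the statement of their Theorem 2.4 directly, which makes your three-step reduction unnecessary; if a self-contained proof is intended, you would need to actually reproduce their construction (or the equivalent machinery in Aliprantis--Burkinshaw around their Lemma 4.35 and Theorem 4.36), which is the part your proposal leaves out.
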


It is obvious that all relatively compact sets and all limited sets in a
Banach lattice are almost limited. The converse does not hold in general.
For example, the closed unit ball $B_{\ell ^{\,\infty }}$ is an almost
limited set in $\ell ^{\infty }$, but is not either compact or limited.

\begin{theorem}
\label{solidalmostlimited}For a Banach lattice $E$ the following statements
hold:

\begin{enumerate}
\item If the lattice operations of $E^{\ast }$ are weak$^{\ast }$
sequentially continuous, then every almost limited solid set in $E$ is
limited.

\item If $E$ has \textit{the property (d) and }every almost limited solid
set in $E$ is limited, then the lattice operations of $E^{\ast }$ are weak$%
^{\ast }$ sequentially continuous.
\end{enumerate}
\end{theorem}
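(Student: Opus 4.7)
The two parts are essentially independent, and I would tackle (2) first since it is a short chain of implications from the earlier propositions.

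\textbf{Part (2).} Under property (d), Proposition~\ref{order interval}(1) (equivalently, Corollary~\ref{cor1}) tells us that every order interval of $E$ is almost limited. Since order intervals are solid, the hypothesis ``every almost limited solid set is limited'' forces every order interval of $E$ to be limited, and Proposition~\ref{order interval}(2) then delivers the weak$^{\ast}$ sequential continuity of the lattice operations of $E^{\ast}$.

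\textbf{Part (1).} Fix an almost limited solid set $A$ and a weak$^{\ast}$ null sequence $(f_n)\subset E^{\ast}$; the hypothesis at once gives $|f_n|\overset{w^{\ast}}{\rightarrow}0$. The plan is to apply Lemma~\ref{DF} in order to conclude $\sup_{x\in A}|f_n|(x)\to 0$. Once this is achieved, the solidness of $A$ (so that $|x|\in A$ whenever $x\in A$) combined with the pointwise bound $|f_n(x)|\le|f_n|(|x|)$ yields $\sup_{x\in A}|f_n(x)|\to 0$, i.e.\ $A$ is limited. The real work is the verification of Lemma~\ref{DF}'s hypothesis, namely $f_n(x_n)\to 0$ for every disjoint sequence $(x_n)\subset A^{+}$. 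My approach is to exploit the Dedekind completeness of $E^{\ast}$ together with the disjointness of $(x_n)$ to localize each $f_n$ to a band of $E^{\ast}$ attached to $x_n$ (the ``carrier'' of $x_n$), producing an auxiliary sequence $(g_n)\subset E^{\ast}$ that is pairwise disjoint, weak$^{\ast}$ null, and satisfies $g_n(x_n)=f_n(x_n)$. The almost-limited property of $A$ applied to this disjoint weak$^{\ast}$ null sequence then forces $|f_n(x_n)|=|g_n(x_n)|\le\sup_{y\in A}|g_n(y)|\to 0$, as required.

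The main obstacle is precisely this disjointification step. Disjointness of $(x_n)$ inside $E$ does not automatically translate into disjointness of the naturally associated bands in $E^{\ast}$, so the construction of the $g_n$ must simultaneously preserve pairwise disjointness, the correct values $f_n(x_n)$, and weak$^{\ast}$ convergence to $0$. It is at this point that the hypothesis on the weak$^{\ast}$ sequential continuity of the lattice operations of $E^{\ast}$ must be fully exploited, both to propagate $|f_n|\overset{w^{\ast}}{\rightarrow}0$ through the construction and to ensure that the band projections produce a sequence that is still weak$^{\ast}$ null.
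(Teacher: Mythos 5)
Your Part (2) is exactly the paper's argument and is fine. For Part (1) your overall scheme (get $|f_n|\overset{w^{\ast}}{\rightarrow}0$ from the hypothesis, verify the hypothesis of Lemma~\ref{DF} by producing a disjoint weak$^{\ast}$ null sequence $(g_n)$ with $g_n(x_n)=f_n(x_n)$, then finish via $|f_n(x)|\le|f_n|(|x|)$ and solidness) is also the paper's scheme, but the one step you explicitly leave open --- the disjointification --- is precisely the heart of the proof, so as written the argument has a genuine gap. The paper closes it by quoting Lemma~2.2 of \cite{AB2}: for every disjoint sequence $(x_n)\subset E^{+}$ and every sequence $(f_n)\subset E^{\ast}$ there exists a pairwise disjoint sequence $(g_n)\subset E^{\ast}$ with $|g_n|\le|f_n|$ and $g_n(x_n)=f_n(x_n)$. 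Your proposed route via ``carrier'' bands of the $x_n$ runs into exactly the difficulty you acknowledge (those bands in $E^{\ast}$ need not be pairwise disjoint), and you give no construction that overcomes it; without the \cite{AB2} lemma or an equivalent substitute, the claim $f_n(x_n)\to 0$ is unproved and Lemma~\ref{DF} cannot be invoked.

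A second, smaller misdiagnosis: you expect the weak$^{\ast}$ sequential continuity of the lattice operations to be needed inside the disjointification to keep $(g_n)$ weak$^{\ast}$ null. In fact the disjointification lemma is a purely order-theoretic statement (it only uses the Dedekind completeness of $E^{\ast}$), and once one has the domination $|g_n|\le|f_n|$, weak$^{\ast}$ nullity of $(g_n)$ is immediate: for every $x\in E$, $|g_n(x)|\le|g_n|(|x|)\le|f_n|(|x|)\to 0$ because $|f_n|\overset{w^{\ast}}{\rightarrow}0$. So the hypothesis on $E^{\ast}$ is used only twice, and in routine ways: to pass from $f_n\overset{w^{\ast}}{\rightarrow}0$ to $|f_n|\overset{w^{\ast}}{\rightarrow}0$ (which both feeds the above estimate and is required by Lemma~\ref{DF}), not to control band projections. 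If you add the \cite{AB2} lemma (with proof or citation) and this domination remark, your outline becomes the paper's proof.
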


\begin{proof}
$(1)$ Let $A\subset E$ be an almost limited solid set. Let $\left(
f_{n}\right) \subset E^{\ast }$ be an arbitrary sequence such that $f_{n}%
\overset{w^{\ast }}{\rightarrow }0$. By hypothesis, $\left\vert
f_{n}\right\vert \overset{w^{\ast }}{\rightarrow }0$. Let $\left(
x_{n}\right) \subset A^{+}$ be a disjoint sequence. By Lemma 2.2 of \cite%
{AB2} there exists a disjoint sequence $\left( g_{n}\right) \subset E^{\ast
} $ such that $\left\vert g_{n}\right\vert \leq \left\vert f_{n}\right\vert $
and $g_{n}\left( x_{n}\right) =f_{n}\left( x_{n}\right) $ for every $n$. It
is easy to see that $g_{n}\overset{w^{\ast }}{\rightarrow }0$. As $A$ is
almost limited $\sup_{x\in A}\left\vert g_{n}(x)\right\vert \rightarrow 0$.
From the inequality $\left\vert f_{n}\left( x_{n}\right) \right\vert
=\left\vert g_{n}\left( x_{n}\right) \right\vert \leq \sup_{x\in
A}\left\vert g_{n}(x)\right\vert $, we conclude that $f_{n}\left(
x_{n}\right) \rightarrow 0$.

Now by Lemma \ref{DF}, we have $\sup_{x\in A}\left\vert f_{n}\right\vert
\left( x\right) =\sup_{x\in A}\left\vert f_{n}\right\vert \left( \left\vert
x\right\vert \right) \rightarrow 0$. Thus by the inequality $\left\vert
f_{n}\left( x\right) \right\vert \leq \left\vert f_{n}\right\vert \left(
\left\vert x\right\vert \right) $ we see that $\sup_{x\in A}\left\vert
f_{n}\left( x\right) \right\vert \rightarrow 0$. Then $A$ is limited.

$\left( 2\right) $ Since $E$ has the property (d) then each order interval $%
\left[ -x,x\right] $\ of $E$ is almost limited\ (Proposition \ref{order
interval} (1)), and by our hypothesis $\left[ -x,x\right] $\ is limited. So,
the lattice operations of $E^{\ast }$ are weak$^{\ast }$ sequentially
continuous (Proposition \ref{order interval} (2)).
\end{proof}

The next main result, gives some equivalent conditions for $T(A)$ to be
almost limited set where $A$ is a norm bounded solid subset of $E$ and $%
T:E\rightarrow F$ is an order bounded operator.

\begin{theorem}
\label{T(A)almostlimited}Let $E$ and $F$ be two Banach lattices such that
the lattice operations of $E^{\ast }$ are sequentially weak$^{\ast }$
continuous or $F$ has the property $(\text{d})$. Then for an order bounded
operator $T:E\rightarrow F$ and a norm bounded solid subset $A\subset E$,
the following assertions are equivalent:

\begin{enumerate}
\item $T(A)$ is almost limited.

\item $f_{n}\left( T\left( x_{n}\right) \right) \rightarrow 0$ for every
disjoint sequence $\left( x_{n}\right) \subset A^{+}$ and every weak$^{\ast
} $ null disjoint sequence $\left( f_{n}\right) \subset F^{\ast }$.
\end{enumerate}

If $F$ has the property $(\text{d})$, we may add:

\begin{enumerate}
\item[$\left( 3\right) $] $f_{n}\left( T\left( x_{n}\right) \right)
\rightarrow 0$ for every disjoint sequence $\left( x_{n}\right) \subset
A^{+} $ and every weak$^{\ast }$ null disjoint sequence $\left( f_{n}\right)
\subset \left( F^{\ast }\right) ^{+}$.
\end{enumerate}
\end{theorem}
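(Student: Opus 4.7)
The plan is to prove $(1)\Leftrightarrow (2)$ under either hypothesis, and then, when $F$ has property $(\text{d})$, to establish $(2)\Leftrightarrow (3)$. The guiding idea is to transport the question from $F^{\ast }$ to $E^{\ast }$ via the adjoint $T^{\ast }:F^{\ast }\to E^{\ast }$, so that the norm bounded solid set $A\subset E$ provides the arena in which Lemma~\ref{DF} applies. The implication $(1)\Rightarrow (2)$ is immediate: if $T(A)$ is almost limited and $(f_{n})\subset F^{\ast }$ is disjoint weak$^{\ast }$ null, then for any $(x_{n})\subset A$ one has $|f_{n}(T(x_{n}))|\le \sup_{y\in T(A)}|f_{n}(y)|\to 0$.

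For $(2)\Rightarrow (1)$, I would take an arbitrary disjoint weak$^{\ast }$ null sequence $(f_{n})\subset F^{\ast }$ and apply Lemma~\ref{DF} to the norm bounded solid set $A$ and the sequence $(T^{\ast }(f_{n}))\subset E^{\ast }$. The disjoint-sequence condition of that lemma is exactly hypothesis $(2)$ rewritten via $T^{\ast }(f_{n})(x_{n})=f_{n}(T(x_{n}))$, so what remains is to verify that $|T^{\ast }(f_{n})|\overset{w^{\ast }}{\rightarrow }0$ in $E^{\ast }$. Under the hypothesis that the lattice operations of $E^{\ast }$ are weak$^{\ast }$ sequentially continuous, this is automatic: $T^{\ast }$ is weak$^{\ast }$-to-weak$^{\ast }$ continuous, so $T^{\ast }(f_{n})\overset{w^{\ast }}{\rightarrow }0$, and the hypothesis delivers $|T^{\ast }(f_{n})|\overset{w^{\ast }}{\rightarrow }0$. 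Under the hypothesis that $F$ has property $(\text{d})$, which yields $|f_{n}|\overset{w^{\ast }}{\rightarrow }0$ in $F^{\ast }$, I would invoke order boundedness of $T$: for each $x\in E^{+}$, pick $u\in F^{+}$ with $T([-x,x])\subset [-u,u]$, so that
\[
|T^{\ast }(f_{n})|(x)=\sup_{|y|\le x}f_{n}(T(y))\le |f_{n}|(u)\longrightarrow 0.
\]
Once $|T^{\ast }(f_{n})|\overset{w^{\ast }}{\rightarrow }0$ is in hand, Lemma~\ref{DF} gives $\sup_{x\in A}|T^{\ast }(f_{n})|(x)\to 0$, and then the bound $|T^{\ast }(f_{n})(x)|\le |T^{\ast }(f_{n})|(|x|)$ combined with solidity of $A$ (so that $|x|\in A$) upgrades this to $\sup_{y\in T(A)}|f_{n}(y)|\to 0$, which is $(1)$.

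For the addendum $(2)\Leftrightarrow (3)$ when $F$ has property $(\text{d})$, the implication $(2)\Rightarrow (3)$ is trivial. For $(3)\Rightarrow (2)$, given a disjoint weak$^{\ast }$ null sequence $(f_{n})\subset F^{\ast }$, property $(\text{d})$ yields $|f_{n}|\overset{w^{\ast }}{\rightarrow }0$, and consequently $f_{n}^{\pm }=(|f_{n}|\pm f_{n})/2\overset{w^{\ast }}{\rightarrow }0$; disjointness of $(f_{n})$ passes to $(f_{n}^{\pm })$ since $f_{n}^{\pm }\le |f_{n}|$ and the $|f_{n}|$ are mutually disjoint. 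Each of $(f_{n}^{+})$ and $(f_{n}^{-})$ is therefore a disjoint weak$^{\ast }$ null sequence in $(F^{\ast })^{+}$, so $(3)$ delivers $f_{n}^{\pm }(T(x_{n}))\to 0$, and subtracting gives $f_{n}(T(x_{n}))\to 0$, which is $(2)$.

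The main obstacle is the verification that $|T^{\ast }(f_{n})|\overset{w^{\ast }}{\rightarrow }0$ under property $(\text{d})$; unlike in the case of weak$^{\ast }$ sequentially continuous lattice operations on $E^{\ast }$, this is not a direct consequence of $T^{\ast }(f_{n})\overset{w^{\ast }}{\rightarrow }0$. It is precisely here that the order boundedness of $T$ is indispensable, as it furnishes the majorant $u\in F^{+}$ that bridges the hypothesis on $F$ with the $E$-side application of Lemma~\ref{DF}.
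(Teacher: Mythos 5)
Your proposal is correct and follows essentially the same route as the paper: the same adjoint-transport argument establishing $\left\vert T^{\ast }(f_{n})\right\vert \overset{w^{\ast }}{\rightarrow }0$ in the two cases (using weak$^{\ast }$ sequential continuity of the lattice operations of $E^{\ast }$, or property $(\text{d})$ together with order boundedness of $T$), followed by Lemma \ref{DF} on the solid set $A$, and the same decomposition $f_{n}=f_{n}^{+}-f_{n}^{-}$ for $(3)\Rightarrow (2)$. Your explicit remark that disjointness passes from $(f_{n})$ to $(f_{n}^{\pm })$ is a small point the paper leaves implicit, but otherwise the arguments coincide.
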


\begin{proof}
$\left( 1\right) \Rightarrow \left( 2\right) $ Follows from the inequality $%
\left\vert f_{n}(T\left( x_{n}\right) )\right\vert \leq \sup_{y\in T\left(
A\right) }\left\vert f_{n}(y)\right\vert $.

$\left( 2\right) \Rightarrow \left( 1\right) $ Let $\left( f_{n}\right)
\subset F^{\ast }$ be a disjoint sequence such that $f_{n}\overset{w^{\ast }}%
{\rightarrow }0$.

We claim that $\left\vert T^{\ast }(f_{n})\right\vert \overset{w^{\ast }}{%
\rightarrow }0$ holds in $E^{\ast }$. In fact,

- if the lattice operations of $E^{\ast }$ are sequentially weak$^{\ast }$
continuous then $\left\vert T^{\ast }(f_{n})\right\vert \overset{w^{\ast }}{%
\rightarrow }0$ (since $T^{\ast }(f_{n})\overset{w^{\ast }}{\rightarrow }0$
holds in $E^{\ast }$).

- if $F$ has the property $(\text{d})$, then $\left\vert f_{n}\right\vert 
\overset{w^{\ast }}{\rightarrow }0$. Let $x\in E^{+}$ and pick some $y\in
F^{+}$ such that $T\left[ -x,x\right] \subseteq \left[ -y,y\right] $ (i.e., $%
\left\vert T\left( u\right) \right\vert \leq y$ for all $u\in \left[ -x,x%
\right] $). Thus 
\begin{eqnarray*}
0 &\leq &\left\vert T^{\ast }\left( f_{n}\right) \right\vert \left( x\right)
\\
&=&\sup \left\{ \left\vert \left( T^{\ast }f_{n}\right) \left( u\right)
\right\vert :\left\vert u\right\vert \leq x\right\} \\
&=&\sup \left\{ \left\vert f_{n}\left( T\left( u\right) \right) \right\vert
:\left\vert u\right\vert \leq x\right\} \\
&\leq &\left\vert f_{n}\right\vert \left( y\right)
\end{eqnarray*}%
As $\left\vert f_{n}\right\vert \overset{w^{\ast }}{\rightarrow }0$, we have 
$\left\vert f_{n}\right\vert \left( y\right) \rightarrow 0$ and hence $%
\left\vert T^{\ast }f_{n}\right\vert \left( x\right) \rightarrow 0$. So $%
\left\vert T^{\ast }f_{n}\right\vert \overset{w^{\ast }}{\rightarrow }0$.

On the other hand, by $\left( 2\right) $, $\left( T^{\ast }\left(
f_{n}\right) \right) \left( x_{n}\right) =f_{n}\left( T\left( x_{n}\right)
\right) \rightarrow 0$ for every disjoint sequence $\left( x_{n}\right)
\subset A^{+}$. Now by Lemma \ref{DF}, $\sup_{x\in A}\left\vert T^{\ast
}(f_{n})\right\vert (x)\rightarrow 0$ and hence%
\begin{equation*}
\sup_{y\in T(A)}\left\vert f_{n}(y)\right\vert =\sup_{x\in A}\left\vert
T^{\ast }(f_{n})(x)\right\vert \rightarrow 0.
\end{equation*}%
Then $T\left( A\right) $ is almost limited.

$\left( 2\right) \Rightarrow \left( 3\right) $ Obvious.

$\left( 3\right) \Rightarrow \left( 2\right) $ If $\left( f_{n}\right) $ is
a disjoint weak$^{\ast }$ null sequence in $E^{\ast }$ then $\left\vert
f_{n}\right\vert \overset{w^{\ast }}{\rightarrow }0$ and hence from the
inequalities $f_{n}^{\,+}\leq \left\vert f_{n}\right\vert $ and $%
f_{n}^{\,-}\leq \left\vert f_{n}\right\vert $, the sequences $(f_{n}^{\,+})$%
, $(f_{n}^{\,-})$ are weak$^{\ast }$ null. Finally, by $\left( 3\right) $, $%
\lim f_{n}\left( T\left( x_{n}\right) \right) =\lim \left[ f_{n}^{\,+}\left(
T\left( x_{n}\right) \right) -f_{n}^{\,-}\left( T\left( x_{n}\right) \right) %
\right] =0$ for every disjoint sequence $\left( x_{n}\right) \subset A^{+}$.
\end{proof}

Note that a norm bounded subset $A$ of a Banach lattice $E$ is almost
limited if and only if $f_{n}\left( x_{n}\right) \rightarrow 0$ for every
sequence $\left( x_{n}\right) \subset A$ and for every weak$^{\ast }$ null
disjoint sequence $\left( f_{n}\right) \subset E^{\ast }$.

However, if we take $E=F$ and $T$ the identity operator on $E$ in Theorem %
\ref{T(A)almostlimited}, we obtain the following characterization of solid
almost limited sets which is a generalization of Theorem 2.5 of \cite{chen}.

\begin{corollary}
\label{corT(A)almostlimited}Let $E$ be a Banach lattice satisfying the
property $(\text{d})$. Then for a norm bounded solid subset $A\subset E$,
the following assertions are equivalent:

\begin{enumerate}
\item $A$ is almost limited.

\item $f_{n}\left( x_{n}\right) \rightarrow 0$ for every disjoint sequence $%
\left( x_{n}\right) \subset A^{+}$ and every weak$^{\ast }$ null disjoint
sequence $\left( f_{n}\right) \subset E^{\ast }$.

\item $f_{n}\left( x_{n}\right) \rightarrow 0$ for every disjoint sequence $%
\left( x_{n}\right) \subset A^{+}$ and every weak$^{\ast }$ null disjoint
sequence $\left( f_{n}\right) \subset \left( E^{\ast }\right) ^{+}$.
\end{enumerate}
\end{corollary}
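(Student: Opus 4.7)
The plan is to derive this corollary directly from Theorem \ref{T(A)almostlimited} by specializing to $F=E$ and taking $T=\mathrm{Id}_{E}$. The identity is trivially an order bounded operator, and the assumption that $E$ satisfies property $(\text{d})$ puts us in the branch of Theorem \ref{T(A)almostlimited} where $F$ (here equal to $E$) has property $(\text{d})$, so that all three equivalences from that theorem are available.

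With this specialization one has $T(A)=A$ and $T(x_{n})=x_{n}$, so condition (1) of Theorem \ref{T(A)almostlimited} becomes ``$A$ is almost limited'', while conditions (2) and (3) of that theorem translate verbatim into conditions (2) and (3) of the corollary (the disjoint weak$^{\ast}$ null sequences $(f_{n})$ are taken in $E^{\ast}$, respectively in $(E^{\ast})^{+}$). Thus (1)$\Leftrightarrow$(2)$\Leftrightarrow$(3) is immediate.

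I would structure the proof as two short sentences: first the observation that the identity is order bounded and that $E$ playing the role of both domain and codomain makes the hypothesis of Theorem \ref{T(A)almostlimited} hold via the second disjunct (property $(\text{d})$); second, the remark that the three assertions of the corollary are precisely what Theorem \ref{T(A)almostlimited} yields after substituting $T=\mathrm{Id}_{E}$. There is essentially no obstacle here, since the result is a pure corollary; the only minor point worth flagging is that we do invoke the stronger version of Theorem \ref{T(A)almostlimited} including condition $(3)$, which requires the property $(\text{d})$ hypothesis, so the statement of the corollary cannot be weakened to a hypothesis of weak$^{\ast}$ sequentially continuous lattice operations on $E^{\ast}$ if one insists on including clause $(3)$.
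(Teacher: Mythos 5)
Your proof is correct and is exactly the paper's own argument: the corollary is obtained by taking $F=E$ and $T=\mathrm{Id}_{E}$ in Theorem \ref{T(A)almostlimited}, with property $(\text{d})$ of $E$ playing the role of the hypothesis on $F$ so that clause $(3)$ is also available. Your closing remark about why the hypothesis cannot be replaced by weak$^{\ast}$ sequential continuity of the lattice operations if one keeps clause $(3)$ is a sensible observation, consistent with how the paper states its companion corollary.
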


If the lattice operations of $E^{\ast }$ are sequentially weak$^{\ast }$
continuous, we obtain the following characterization of solid limited sets.

\begin{corollary}
Let $E$ be a Banach lattice. If the lattice operations of $E^{\ast }$ are
sequentially weak$^{\ast }$ continuous, then for a norm bounded solid subset 
$A\subset E$, the following assertions are equivalent:

\begin{enumerate}
\item $A$ is limited.

\item $f_{n}\left( x_{n}\right) \rightarrow 0$ for every disjoint sequence $%
\left( x_{n}\right) \subset A^{+}$ and every weak$^{\ast }$ null disjoint
sequence $\left( f_{n}\right) \subset E^{\ast }$.

\item $f_{n}\left( x_{n}\right) \rightarrow 0$ for every disjoint sequence $%
\left( x_{n}\right) \subset A^{+}$ and every weak$^{\ast }$ null disjoint
sequence $\left( f_{n}\right) \subset \left( E^{\ast }\right) ^{+}$.
\end{enumerate}
\end{corollary}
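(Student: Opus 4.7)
The plan is to reduce this corollary to the two preceding results rather than argue from scratch. The hypothesis that the lattice operations of $E^{\ast}$ are sequentially weak$^{\ast}$ continuous is strictly stronger than the property $(\text{d})$ (as the paper itself has already observed just before Proposition \ref{almostorderbounded}). Consequently Corollary \ref{corT(A)almostlimited} applies to $E$, and it tells us that for our norm bounded solid set $A \subset E$, the three statements
\[
\text{``$A$ is almost limited''},\qquad (2),\qquad (3)
\]
are pairwise equivalent.

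The only remaining task is to upgrade ``almost limited'' to ``limited'' for solid sets under the stronger hypothesis on $E^{\ast}$. One direction is trivial: every limited set is almost limited, since every disjoint weak$^{\ast}$ null sequence is in particular weak$^{\ast}$ null. The converse is exactly the content of Theorem \ref{solidalmostlimited}(1): when the lattice operations of $E^{\ast}$ are weak$^{\ast}$ sequentially continuous, every almost limited solid subset of $E$ is already limited. Splicing this equivalence with the one from Corollary \ref{corT(A)almostlimited} yields the chain $(1) \Leftrightarrow \text{``almost limited''} \Leftrightarrow (2) \Leftrightarrow (3)$ and closes the proof.

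I do not expect a genuine obstacle here, because the statement is essentially a syntactic merge of Theorem \ref{solidalmostlimited}(1) with Corollary \ref{corT(A)almostlimited}. The only point that might deserve a single line of justification is that sequential weak$^{\ast}$ continuity of the lattice operations of $E^{\ast}$ implies property $(\text{d})$, but this is immediate from specialising the continuity hypothesis to disjoint weak$^{\ast}$ null sequences. Accordingly, the write-up should be brief: state the reduction to the almost-limited version, invoke Theorem \ref{solidalmostlimited}(1) to replace ``almost limited'' by ``limited'', and note the trivial reverse inclusion.
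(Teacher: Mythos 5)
Your proposal is correct and follows essentially the same route as the paper, which also proves this corollary by combining Corollary \ref{corT(A)almostlimited} (applicable since weak$^{\ast}$ sequential continuity of the lattice operations of $E^{\ast}$ implies property $(\text{d})$) with Theorem \ref{solidalmostlimited}. Your write-up simply makes explicit the details the paper leaves to the reader.
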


\begin{proof}
Is a simple consequence of Corollary \ref{corT(A)almostlimited} combined
with Theorem \ref{solidalmostlimited}.
\end{proof}

\section{Weak Dunford-Pettis$^{\ast }$ property and dual Schur property}

Recall that a Banach lattice $E$ is called to have the \emph{weak
Dunford-Pettis}$^{\ast }$\emph{\ property} (\emph{wDP}$^{\ast }$\emph{\
property} for short) if every relatively weakly compact set in $E$ is almost
limited \cite[Definition 3.1]{chen}.

In other words, $E$ has the wDP$^{\ast }$ property if and only if for each
weakly null sequence $(x_{n})$ in $E$ and each disjoint weak$^{\ast }$ null
sequence $\left( f_{n}\right) $ in $E^{^{\ast }}$, $f_{n}(x_{n})\rightarrow
0 $.

Also, we say that a Banach lattice $E$ satisfy the \emph{bi-sequence property%
} if for each disjoint weakly null sequence $(x_{n})\subset E^{+}$ and each
weak$^{\ast }$ null sequence $(f_{n})\subset (E^{\ast })^{+}$, we have $%
f_{n}(x_{n})\rightarrow 0$, equivalently, if for each disjoint weakly null
sequence $(x_{n})\subset E^{+}$ and each disjoint weak$^{\ast }$ null
sequence $(f_{n})\subset (E^{\ast })^{+}$, we have $f_{n}(x_{n})\rightarrow
0 $ \cite[Proposition 2.4]{WN2012}.

Clearly, every Banach lattice with the wDP$^{\ast }$ property has the
bi-sequence property. The converse is false in general. In fact, the Banach
lattice $c$ has the bi-sequence property (each positive weak$^{\ast }$ null
sequence $(f_{n})$ in $c^{\ast }$ is norm null) but does not have the wDP$%
^{\ast }$ property. Indeed, let $f_{n}\in c^{\ast }=\ell ^{1}$\ be defined
as follows $f_{n}=(0,\cdot \cdot \cdot ,0,1_{(2n)},-1_{(2n+1)},0,\cdot \cdot
\cdot )$. Then $(f_{n})$ is a disjoint weak$^{\ast }$null sequence in $%
c^{\ast }$\cite[Example 2.1(2)]{chen}, and clearly, the sequence $\left(
x_{n}\right) $ defined by $x_{n}=(0,\cdot \cdot \cdot ,0,1_{(2n)},0,\cdot
\cdot \cdot )\in c$ is weakly null, but $f_{n}\left( x_{n}\right) =1$ for
all $n$.

For the Banach lattices satisfying the property $(\text{d})$, the concepts
of bi-sequence property and wDP$^{\ast }$ property coincide. The details
follow.

\begin{theorem}
\label{bisequence}For a Banach lattice $E$ satisfying the property $(\text{d}%
)$, the following statements are equivalent:

\begin{enumerate}
\item $E$ has the wDP$^{\ast }$ property.

\item For each disjoint weakly null sequence $(x_{n})\subset E$ and each
disjoint weak$^{\ast }$ null sequence $(f_{n})\subset E^{\ast }$, we have $%
f_{n}(x_{n})\rightarrow 0$.

\item For each disjoint weakly null sequence $(x_{n})\subset E^{+}$ and each
disjoint weak$^{\ast }$ null sequence $(f_{n})\subset (E^{\ast })^{+}$, we
have $f_{n}(x_{n})\rightarrow 0$.

\item The solid hull of every relatively weakly compact set in $E$ is almost
limited.

\item For each weakly null sequence $(x_{n})\subset E^{+}$ and each weak$%
^{\ast }$ null sequence $(f_{n})\subset (E^{\ast })^{+}$, we have $%
f_{n}(x_{n})\rightarrow 0$.

\item $E$\ has the bi-sequence property.
\end{enumerate}
\end{theorem}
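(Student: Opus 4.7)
The plan is to close the cycle $(1) \Rightarrow (2) \Rightarrow (3) \Rightarrow (4) \Rightarrow (1)$ and attach the side equivalences $(3) \Leftrightarrow (6) \Leftrightarrow (5)$. Four of these steps will be routine restrictions: $(1) \Rightarrow (2)$ passes from weakly null to disjoint weakly null sequences; $(2) \Rightarrow (3)$ restricts to positive sequences; $(4) \Rightarrow (1)$ uses that every relatively weakly compact $W$ is contained in its solid hull together with the fact that subsets of almost limited sets are almost limited; $(5) \Rightarrow (6)$ restricts $(x_n)$ to be disjoint; and $(3) \Leftrightarrow (6)$ is already recorded just before the statement via Proposition~2.4 of \cite{WN2012}.

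The nontrivial implication $(3) \Rightarrow (4)$ rests on Corollary~\ref{corT(A)almostlimited}. Given a relatively weakly compact $W \subseteq E$, I would take $A$ to be the solid hull of $W$, which is norm bounded and solid, and verify condition~(3) of that corollary. Fix a disjoint sequence $(y_n) \subseteq A^{+}$ and a disjoint weak$^\ast$ null sequence $(g_n) \subseteq (E^\ast)^+$. The classical fact that every disjoint sequence lying in the solid hull of a relatively weakly compact subset of a Banach lattice is weakly null (cf.~\cite{AB3}) makes $(y_n)$ a disjoint weakly null sequence in $E^+$; hypothesis $(3)$ then yields $g_n(y_n) \to 0$, and so $A$ is almost limited.

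The step $(6) \Rightarrow (5)$ combines this same fact with Lemma~\ref{DF}. Given weakly null $(x_n) \subseteq E^+$ and weak$^\ast$ null $(f_n) \subseteq (E^\ast)^+$, let $A$ be the solid hull of $\{x_n : n \ge 1\}$. Since $f_n \ge 0$, one has $|f_n| = f_n \overset{w^\ast}{\to} 0$. For any disjoint $(y_n) \subseteq A^+$, the sequence is weakly null by the classical fact above, and the bi-sequence property $(6)$ forces $f_n(y_n) \to 0$. Lemma~\ref{DF} therefore applies on the norm bounded solid set $A$, yielding $\sup_{y \in A} |f_n|(y) \to 0$; since $x_n \in A$ and $f_n \ge 0$, this forces $f_n(x_n) \to 0$.

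The main obstacle is spotting that the solid hull of $\{x_n\}$ is the right norm bounded solid set on which to invoke Lemma~\ref{DF}: once this choice is made, the hypothesis that $E$ has property $(\text{d})$ enters only through Corollary~\ref{corT(A)almostlimited}, and the whole argument reduces to pairing the disjoint-sequence-in-solid-hull result with the bi-sequence hypothesis.
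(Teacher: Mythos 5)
Your proof is correct, and its skeleton coincides with the paper's: the only substantive implication, $(3)\Rightarrow(4)$, is handled exactly as the paper prescribes (the paper says ``repeat the proof of $(3)\Rightarrow(4)$ of \cite[Theorem 3.2]{chen} using Corollary \ref{corT(A)almostlimited}''), namely by taking the solid hull of the relatively weakly compact set, invoking the classical fact that disjoint sequences in the solid hull of a relatively weakly compact set are weakly null \cite[Theorem 4.34]{AB3}, and feeding condition $(3)$ into Corollary \ref{corT(A)almostlimited}, which is where property $(\text{d})$ is used; the trivial implications and $(4)\Rightarrow(1)$ are the same. The one point where you genuinely diverge is the block $(3)\Leftrightarrow(5)\Leftrightarrow(6)$: the paper disposes of it entirely by citing Proposition 2.4 of \cite{WN2012}, whereas you only use that citation for $(3)\Leftrightarrow(6)$ and give a direct proof of $(6)\Rightarrow(5)$ by applying Lemma \ref{DF} to the (norm bounded, solid) solid hull of the weakly null sequence $\{x_n\}$, again via \cite[Theorem 4.34]{AB3}; this is a sound argument (note $|f_n|=f_n$ since $f_n\geq 0$, and the pairing $f_n(y_n)\to 0$ for every disjoint $(y_n)\subset A^+$ is exactly the hypothesis of Lemma \ref{DF}) and has the modest advantage of making that step self-contained and independent of property $(\text{d})$, at the cost of re-proving part of Wnuk's proposition that the paper simply quotes.
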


\begin{proof}
$(1)\Rightarrow (2)\Rightarrow (3)$ and $(4)\Rightarrow (1)$\ are obvious.

$(3)\Leftrightarrow (5)\Leftrightarrow (6)$ by Proposition 2.4 of \cite%
{WN2012}.

$(3)\Rightarrow (4)$ Using Corollary \ref{corT(A)almostlimited}, it is
enough to repeat the proof of the implication $(3)\Rightarrow (4)$ in \cite[%
Theorem 3.2]{chen}.
\end{proof}

Note that the equivalences $(1)\Leftrightarrow (2)\Leftrightarrow
(3)\Leftrightarrow (4)$ are proved in \cite[Theorem 3.2]{chen} under the
hypothesis that $E$ is $\sigma $-Dedekind complete.

Recall that a Banach lattice $E$ has the \emph{dual positive Schur property }%
($E\in $(\textrm{DPSP})), if every weak$^{\ast }$ null sequence $\left(
f_{n}\right) \subset \left( E^{\ast }\right) ^{+}$ is norm null. This
property was introduced in \cite{aqz} and further developed in \cite{WN2012}%
. We remark here that $E\in $(\textrm{DPSP}) if every disjoint weak$^{\ast }$
null sequence $\left( f_{n}\right) \subset \left( E^{\ast }\right) ^{+}$ is
norm null \cite[Proposition 2.3]{WN2012}. So, it is natural to study the
Banach lattices $E$ satisfying the following property:%
\begin{equation*}
f_{m}\perp f_{k}\text{ in }E^{\ast }\text{ and }f_{n}\overset{w^{\ast }}{%
\rightarrow }0\text{ implies }\left\Vert f_{n}\right\Vert \rightarrow 0.
\end{equation*}

\begin{definition}
A Banach lattice $E$ is called to have the \emph{dual Schur property} ($E\in 
$(DSP)), if each disjoint weak$^{\ast }$ null sequence $\left( f_{n}\right)
\subset E^{\ast }$ is norm null.
\end{definition}

In other words, $E$ has the dual Schur property if and only if the closed
unit ball $B_{E}$ is almost limited, equivalently, each norm bounded set in $%
E$ is almost limited.

Clearly, if $E\in $(\textrm{DSP}) then $E\in $(\textrm{DPSP}) and $E$ has
the wDP$^{\ast }$ property, but the converse is false in general. In fact,
the Banach lattice $c$ has the dual positive Schur property ($c\in $(\textrm{%
DPSP})) but by Example 2.1(2) of \cite[(2)]{chen}, $c\notin $(\textrm{DSP}).
On the other hand, the Banach lattice $\ell ^{1}$ has the wDP$^{\ast }$
property but $\ell ^{1}\notin $(\textrm{DPSP}) (see Proposition 2.1 of \cite%
{WN2012}) and hence $\ell ^{1}\notin $(\textrm{DSP}).

For the Banach lattices satisfying the property $(\text{d})$ (in particular,
if $E$ is $\sigma $-Dedekind complete), the notions of dual Schur property
and dual positive Schur property coincide, and some new characterizations of
the dual positive Schur property can be obtained. The details follow.

\begin{theorem}
\label{DSP}For a Banach lattice $E$ satisfying the property $(\text{d})$,
the following statements are equivalent:

\begin{enumerate}
\item $E\in $(\textrm{DSP})

\item $E\in $(\textrm{DPSP})

\item $E^{\ast }$ has order continuous norm and $E$ has the bi-sequence
property.

\item $E^{\ast }$ has order continuous norm and $E$ has the wDP$^{\ast }$
property.

\item $f_{n}\left( x_{n}\right) \rightarrow 0$ for every bounded and
disjoint sequence $\left( x_{n}\right) \subset E^{+}$ and every disjoint weak%
$^{\ast }$null sequence $\left( f_{n}\right) \subset E^{\ast }$.

\item $f_{n}\left( x_{n}\right) \rightarrow 0$\ for every bounded and
disjoint sequence $\left( x_{n}\right) \subset E^{+}$\ and every disjoint
weak$^{\ast }$null sequence $\left( f_{n}\right) \subset \left( E^{\ast
}\right) ^{+}$.
\end{enumerate}
\end{theorem}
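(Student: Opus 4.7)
The plan is to close the cycle $(1)\Leftrightarrow (2)$ together with $(1)\Leftrightarrow (5)\Leftrightarrow (6)$ and $(1)\Rightarrow (4)\Leftrightarrow (3)\Rightarrow (6)$, so that every equivalence in the list reduces to one of three inputs already present in the paper: property $(\text{d})$ itself, the characterization of solid almost limited sets (Corollary \ref{corT(A)almostlimited}), and the dictionary between the wDP$^{\ast }$ property and the bi-sequence property (Theorem \ref{bisequence}).

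The equivalences $(1)\Leftrightarrow (5)\Leftrightarrow (6)$ are an immediate application of Corollary \ref{corT(A)almostlimited} to the norm bounded solid set $A=B_{E}$: by the very definition of (\textrm{DSP}), $E\in (\textrm{DSP})$ means exactly that $B_{E}$ is almost limited, and a disjoint sequence in $B_{E}^{+}$ is, up to rescaling, the same as a bounded disjoint sequence in $E^{+}$. For $(1)\Leftrightarrow (2)$, only $(2)\Rightarrow (1)$ is non-trivial: if $(f_{n})\subset E^{\ast }$ is disjoint weak$^{\ast }$ null, then property $(\text{d})$ gives $\left\vert f_{n}\right\vert \overset{w^{\ast }}{\rightarrow }0$, and (\textrm{DPSP}) applied to the positive sequence $(\left\vert f_{n}\right\vert )$ forces $\left\Vert f_{n}\right\Vert =\left\Vert \left\vert f_{n}\right\vert \right\Vert \rightarrow 0$.

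For $(1)\Rightarrow (4)$, the wDP$^{\ast }$ property is obvious from the definitions, while order continuity of the norm on $E^{\ast }$ follows from the observation that every order bounded disjoint sequence $(f_{n})\subset E^{\ast }$ is automatically weak$^{\ast }$ null (since $0\leq f_{n}\leq g$ yields $\sum f_{n}(x)\leq g(x)<\infty $ for every $x\in E^{+}$) and hence norm null by (\textrm{DSP}); this is the standard characterization of order continuity on $E^{\ast }$. The equivalence $(3)\Leftrightarrow (4)$ is then exactly Theorem \ref{bisequence} under the standing hypothesis $(\text{d})$.

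The one implication that needs an external input is $(3)\Rightarrow (6)$. Given a bounded disjoint sequence $(x_{n})\subset E^{+}$ and a disjoint weak$^{\ast }$ null sequence $(f_{n})\subset (E^{\ast })^{+}$, I would invoke the classical characterization (Theorem~2.4.14 of Meyer-Nieberg's \emph{Banach Lattices}) that $E^{\ast }$ has order continuous norm precisely when every norm bounded disjoint sequence in $B_{E}^{+}$ is weakly null. Applied to $(x_{n})$ this gives $x_{n}\overset{w}{\rightarrow }0$, and the bi-sequence part of $(3)$ then yields $f_{n}(x_{n})\rightarrow 0$, closing the cycle. The hard part is locating this primal-side characterization of order continuity of $E^{\ast }$: it is the precise step where the extra assumption on $E^{\ast }$ in $(3)$--$(4)$ is used, and it is exactly what separates these conditions from the strictly weaker (2); once it is invoked, the remaining verifications are routine bookkeeping with the results of Sections~2 and~3.
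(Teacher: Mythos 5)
Your proof is correct, but it closes the chain differently from the paper at one point. The parts you share with the paper are handled identically: $(1)\Leftrightarrow(5)\Leftrightarrow(6)$ via Corollary \ref{corT(A)almostlimited} applied to $A=B_{E}$, $(3)\Leftrightarrow(4)$ via Theorem \ref{bisequence}, and $(1)\Leftrightarrow(2)$ via property $(\text{d})$ (note that the direction $(1)\Rightarrow(2)$, which you dismiss as trivial, silently uses the reduction of the dual positive Schur property to \emph{disjoint} positive weak$^{\ast}$ null sequences, i.e.\ Proposition 2.3 of \cite{WN2012}, which the paper has quoted just before the definition of (DSP) -- so this is legitimate, but worth a word). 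Where you diverge is the link between (2)--(3): the paper simply cites Proposition 2.5 of \cite{WN2012} for $(2)\Leftrightarrow(3)$, whereas you route the argument as $(1)\Rightarrow(4)\Leftrightarrow(3)\Rightarrow(6)\Rightarrow(1)$, proving the order continuity of the norm of $E^{\ast}$ from (DSP) by observing that order bounded disjoint sequences in $E^{\ast}$ are weak$^{\ast}$ null and invoking the standard characterization of order continuity by disjoint order bounded sequences, and then getting $(3)\Rightarrow(6)$ from the classical fact (Meyer--Nieberg Thm.\ 2.4.14, equivalently Aliprantis--Burkinshaw Thm.\ 4.69) that order continuity of the dual norm is equivalent to weak nullity of norm bounded disjoint sequences in $E^{+}$, followed by the bi-sequence property. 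Both of the external facts you invoke are correct and correctly applied, and the resulting implication graph does make all six statements equivalent. What your route buys is independence from Wnuk's Proposition 2.5, replacing it by textbook characterizations of order continuity; what the paper's route buys is brevity, since that single citation does the work of your two implications.
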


\begin{proof}
$\left( 1\right) \Leftrightarrow \left( 2\right) $ Obvious, as $E$ has the
property $($d$)$.

$\left( 2\right) \Leftrightarrow \left( 3\right) $ Proposition 2.5 \cite%
{WN2012}.

$\left( 3\right) \Leftrightarrow \left( 4\right) $ Follows from Theorem \ref%
{bisequence}.

$\left( 1\right) \Leftrightarrow \left( 5\right) \Leftrightarrow \left(
6\right) $ Follows from Corollary \ref{corT(A)almostlimited} by noting that $%
E\in $(\textrm{DSP}) if and only if the closed unit ball $B_{E}$\ is almost
limited.
\end{proof}

\begin{remark}
If $E\in $(\textrm{DPSP}) then the lattice operation of $E^{\ast }$ are not
weak$^{\ast }$ sequentially continuous when $E$ is infinite dimensional.
This is a simple consequence of Josefson--Nissenzweig theorem.

On the other hand, only the finite dimensional Banach spaces $X$ satisfy the
following property%
\begin{equation*}
\text{ }f_{n}\overset{w^{\ast }}{\rightarrow }0\text{ in }X^{\ast }\text{
implies }\left\Vert f_{n}\right\Vert \rightarrow 0.
\end{equation*}
\end{remark}

\section{Applications: Almost limited operators}

Using the limited sets, Bourgain and Diestel \cite{BD} introduced the class
of limited operators. An operator $T:X\rightarrow Y$ is said to be \emph{%
limited} if $T\left( B_{X}\right) $ is a limited subset of $Y$.
Alternatively, an operator $T:X\rightarrow Y$ is limited if and only if $%
\left\Vert T^{\ast }\left( f_{n}\right) \right\Vert \rightarrow 0$ for every
weak$^{\ast }$ null sequence $\left( f_{n}\right) \subset Y^{\ast }$.
Therefore, operators $T:X\rightarrow E$ that carry closed unit ball to
almost limited sets arise naturally.

\begin{definition}
\label{def1}An operator $T:X\rightarrow E$\ from a Banach space into a
Banach lattice is said to be almost limited, if $T\left( B_{X}\right) $\ is
an almost limited subset of $E$.
\end{definition}

In other words, $T$\ is almost limited if and only if $\left\Vert T^{\ast
}\left( f_{n}\right) \right\Vert \rightarrow 0$\ for every disjoint weak$%
^{\ast }$\ null sequence $\left( f_{n}\right) \subset E^{\ast }$.

Clearly every limited operator $T:X\rightarrow E$ is almost limited. But the
converse is not true in general. In fact, since $B_{l^{\infty }}$ is almost
limited, the identity operator $I:l^{\infty }\rightarrow l^{\infty }$ is
almost limited. But $I$ is not limited as $B_{l^{\infty }}$ is not a limited
set.

Recall that an operator $T:X\rightarrow E$ is called \emph{L-weakly compact}
if $T\left( B_{X}\right) $ is L-weakly compact. Note that, the identity
operator $I:l^{\infty }\rightarrow l^{\infty }$ is almost limited but fail
to be L-weakly compact (resp. compact).

Also, an operator $T:X\rightarrow E$ is said to be semi-compact if $T(B_{X})$
is almost order bounded, that is, for each $\varepsilon >0$ there exists
some $u\in E^{+}$ such that $T(B_{X})\subset \lbrack -u,u]+\varepsilon B_{E}$%
.

The next theorem deals the relationship between almost limited and L-weakly
compact (resp.semi-compact) operators.

\begin{theorem}
\label{almostlimitedisLW}Let $X$ be a non-zero Banach space and let $E$ be a
Banach lattice. Then the following statements hold.

\begin{enumerate}
\item Each L-weakly compact operator $T:X\rightarrow E$ is almost limited.

\item Each almost limited operator $T:X\rightarrow E$ is L-weakly compact if
and only if the norm of $E$ is order continuous.

\item If $E$ is discrete with order continuous norm, then each almost
limited operator $T:X\rightarrow E$ is compact.

\item If $E$\ satisfy the property $($\textrm{d}$)$\ then each semi-compact
operator $T:X\rightarrow E$\ is almost limited.

\item If the lattice operations of $E^{\ast }$\ are weak$^{\ast }$\
sequentially continuous then each semi-compact operator $T:X\rightarrow E$\
is limited.
\end{enumerate}
\end{theorem}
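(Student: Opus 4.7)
Statements (4) and (5) are immediate consequences of Proposition \ref{almostorderbounded}: ``$T$ is semi-compact'' means exactly that $T(B_X)$ is almost order bounded, and Proposition \ref{almostorderbounded}(2) (resp.\ (1)) says that under property $(\text{d})$ (resp.\ weak${}^{\ast}$ sequential continuity of the lattice operations of $E^{\ast}$) every almost order bounded subset of $E$ is almost limited (resp.\ limited). No further work is required.

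For (1) I will use the standard duality between L- and M-weakly compact operators (see, e.g., Aliprantis--Burkinshaw): a bounded operator $T:X\to E$ is L-weakly compact if and only if its adjoint $T^{\ast}:E^{\ast}\to X^{\ast}$ is M-weakly compact, equivalently $\|T^{\ast}(f_n)\|\to 0$ for every norm bounded disjoint sequence $(f_n)\subset E^{\ast}$. Since a weak${}^{\ast}$ null sequence in $E^{\ast}$ is automatically norm bounded by the uniform boundedness principle, any disjoint weak${}^{\ast}$ null sequence is a special instance of a bounded disjoint one; thus if $T$ is L-weakly compact then $\|T^{\ast}(f_n)\|\to 0$ on every disjoint weak${}^{\ast}$ null $(f_n)$, which is precisely the definition of $T$ being almost limited.

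For (2), the forward direction uses additionally the classical fact that if $E$ has order continuous norm then every norm bounded disjoint sequence in $E^{\ast}$ is weak${}^{\ast}$ null. Combined with $T$ almost limited this gives $\|T^{\ast}(f_n)\|\to 0$ for every bounded disjoint $(f_n)\subset E^{\ast}$, i.e.\ $T^{\ast}$ is M-weakly compact, whence $T$ is L-weakly compact by the same duality. For the reverse direction I argue by contraposition: if $E$ is not order continuous, there exist $y\in E^{+}$ and a disjoint sequence $(x_n)\subset[0,y]$ with $\inf_n\|x_n\|>0$; choose $\phi\in X^{\ast}$ of norm one (possible since $X\neq\{0\}$, by Hahn--Banach) and form the rank-one operator $T(x)=\phi(x)y$. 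Then $T(B_X)=\{\alpha y:|\alpha|\le 1\}$ is compact, hence almost limited, while $\mathrm{sol}(T(B_X))=[-y,y]$ contains the disjoint sequence $(x_n)$ with $\inf_n\|x_n\|>0$, so $T(B_X)$ cannot be L-weakly compact.

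For (3), I will apply (2) to obtain that $T$ is L-weakly compact, and then invoke the standard fact that in an order continuous Banach lattice every L-weakly compact set is almost order bounded: for each $\varepsilon>0$ there is $u\in E^{+}$ with $T(B_X)\subset[-u,u]+\varepsilon B_E$. The final step is that when $E$ is discrete with order continuous norm every order interval $[-u,u]$ is norm compact---writing $u$ as an unconditional sum of atoms and using order continuity to truncate to finitely many atoms modulo arbitrarily small error, $[-u,u]$ is reduced to a bounded subset of a finite-dimensional sublattice. Consequently $T(B_X)$ lies arbitrarily close to a compact set, is therefore totally bounded, and so $T$ is compact. The main potential obstacle is the lemma used in the forward direction of (2)---order continuity of $E$ forcing norm bounded disjoint sequences in $E^{\ast}$ to be weak${}^{\ast}$ null---together with the L-/M-weakly compact duality; once these standard facts are granted, the remaining pieces are short deductions from results already proved in the paper, and (2) reverse is the only genuinely new construction.
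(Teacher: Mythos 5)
Your proof is correct and follows essentially the same route as the paper: (4) and (5) from Proposition \ref{almostorderbounded}, (1) and the ``if'' part of (2) via the L-/M-weak compactness duality, the ``only if'' part of (2) via the same rank-one operator $T(x)=\varphi(x)y$ built from a non-norm-null disjoint sequence in $[0,y]$, and (3) via (2) together with semi-compactness and norm compactness of order intervals in a discrete order continuous lattice. The only difference is that where the paper simply cites external results (Theorem 2.6 of Chen et al., Theorem 5.71 of Aliprantis--Burkinshaw, Theorem 6.1 of Wnuk), you reprove these standard ingredients directly, which changes nothing in the structure of the argument.
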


\begin{proof}
$\left( 1\right) $ follows immediately from Theorem 2.6(1) of \cite{chen}.

$\left( 2\right) $ The \textquotedblleft if\textquotedblright\ part follows
immediately from Theorem 2.6(2) of \cite{chen}. For the \textquotedblleft
only if\textquotedblright\ part assume by way of contradiction that the norm
on $E$ is not order continuous. Then, there exists some $y\in E^{+}$ and a
disjoint sequence $(y_{n})\subset \lbrack 0,y]$ which does not converge to
zero in norm. For non-zero Banach space $X$, choosing $u\in X$\ such that $%
\left\Vert u\right\Vert =1$\ and let $\varphi \in X^{\ast }$\ such that $%
\left\Vert \varphi \right\Vert =1$\ and $\varphi \left( u\right) =\left\Vert
u\right\Vert $. Now, define the operator $T:X\rightarrow E$\ as follows:%
\begin{equation*}
T\left( x\right) =\varphi \left( x\right) y\qquad \text{for every }x\in E.
\end{equation*}%
Clearly, $T$ is almost limited as it is compact (its rank is one). Hence, By
hypothesis $T$ is L-weakly compact. Note that $(y_{n})$ is a disjoint
sequence in the solid hull of $T(B_{E})$. But the L-weak compactness of $T$
imply that $\left\Vert y_{n}\right\Vert \rightarrow 0$, which is a
contradiction.

$\left( 3\right) $ Assume that $T:X\rightarrow E$ is almost limited. Since
the norm of $E$ is order continuous, then by (2) $T$ is L-weakly compact,
and by Theorem 5.71 of \cite{AB3} $T$ is semi-compact. So, for each $%
\varepsilon >0$ there exists some $u\in E^{+}$ such that $T(B_{X})\subset
\lbrack -u,u]+\varepsilon B_{E}$. Since $E$ is discrete with order
continuous norm, it follows from Theorem 6.1 of \cite{WN4} that $[-u,u]$ is
compact. Therefore, $T$ is compact.

$\left( 4\right) $ and $\left( 5\right) $ follows immediately from
Proposition \ref{almostorderbounded}.
\end{proof}

If the Banach lattice $E$ is $\sigma $-Dedekind complete, we obtain the
following result.

\begin{theorem}
\label{compact}For a $\sigma $-Dedekind complete Banach lattice $E$ the
following assertions are equivalent:

\begin{enumerate}
\item $E$ is discrete with order continuous norm.

\item Each almost limited operator from an arbitrary Banach space $X $ into $%
E$ is limited.

\item Each almost limited operator from $\ell ^{1}$ into $E$ is limited.
\end{enumerate}
\end{theorem}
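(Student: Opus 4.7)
The plan is to dispatch the easy implications first and then focus on the nontrivial $(3)\Rightarrow(1)$. For $(1)\Rightarrow(2)$, if $E$ is discrete with order continuous norm, Theorem~\ref{almostlimitedisLW}(3) gives that every almost limited operator $T:X\to E$ is even compact, and compact operators are trivially limited. The implication $(2)\Rightarrow(3)$ is the trivial specialization $X=\ell^{1}$.

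For $(3)\Rightarrow(1)$, I will argue by contraposition: assume $E$ is $\sigma$-Dedekind complete but not both discrete and order continuous, and construct an almost limited operator $T:\ell^{1}\to E$ that fails to be limited. The strategy is to first establish the key claim that under these hypotheses there exists $u\in E^{+}$ such that the order interval $[-u,u]$ is not a limited set; equivalently, by Proposition~\ref{order interval}(2), the lattice operations on $E^{*}$ are not weak$^{\ast}$-sequentially continuous.

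Granting this claim, the construction is routine. Pick a weak$^{\ast}$-null sequence $(f_{k})\subset E^{*}$ with $|f_{k}|(u)\not\to 0$, pass to a subsequence with $|f_{k}|(u)\geq c>0$, and use the identity $|f|(u)=\sup\{|f(x)|:|x|\leq u\}$ to choose $x_{k}\in[-u,u]$ with $|f_{k}(x_{k})|\geq c/2$. Define $T(e_{k})=x_{k}$; extended by linearity this yields a bounded operator $\ell^{1}\to E$ with $T(B_{\ell^{1}})\subseteq[-u,u]$, because for $\|a\|_{1}\leq 1$ one has $|T(a)|\leq\sum|a_{k}|\,|x_{k}|\leq\bigl(\sum|a_{k}|\bigr)u\leq u$. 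Since $E$ is $\sigma$-Dedekind complete, Corollary~\ref{cor1} says $[-u,u]$ is almost limited, hence so is $T$. However, $\|T^{*}(f_{k})\|_{\ell^{\infty}}\geq|T^{*}(f_{k})(e_{k})|=|f_{k}(x_{k})|\geq c/2$, so $T^{*}(f_{k})$ does not tend to zero in norm, and $T$ is not limited, contradicting $(3)$.

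The real work thus lies in proving the structural claim: under $\sigma$-Dedekind completeness, failing to be discrete with OCN forces some order interval to not be limited. I will split this into two sub-cases. When $E$ lacks OCN, pick $y\in E^{+}$ and a disjoint sequence $(y_{n})\subset[0,y]$ with $\|y_{n}\|\geq\varepsilon>0$; using $\sigma$-Dedekind completeness, $\tilde{y}:=\sum_{n}y_{n}$ exists, the principal band $B_{\tilde{y}}$ is a projection band, and $(a_{n})\mapsto\sum a_{n}y_{n}$ is a lattice embedding of $\ell^{\infty}$ into $B_{\tilde{y}}$. Applying the Josefson--Nissenzweig theorem in $\ell^{\infty}$ to produce a weak$^{\ast}$-null sequence of norm one and lifting through the band projection $P:E\to B_{\tilde{y}}$ should yield $(f_{k})\subset E^{*}$ weak$^{\ast}$-null with $|f_{k}|(\tilde{y})$ bounded away from zero. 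When $E$ has OCN but is not discrete, the principal ideal at a non-atomic element is an AM-space with order unit (Kakutani) lattice-isomorphic to a non-atomic $C(K)$, and a Rademacher-type weakly null sequence inside an order interval, together with its dual, witnesses the desired failure of weak$^{\ast}$-sequential continuity. The main obstacle throughout is ensuring that the lifted functionals are genuinely weak$^{\ast}$-null in $E^{*}$ (not merely norm bounded); this is precisely what the band-projection machinery supplied by $\sigma$-Dedekind completeness is designed to handle.
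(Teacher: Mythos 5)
Your treatment of $(1)\Rightarrow(2)$, $(2)\Rightarrow(3)$ and, above all, your $\ell^{1}$-operator construction coincide with the paper's proof: the paper also picks $x_{k}\in[-u,u]$ with $2f_{k}(x_{k})\geq\left\vert f_{k}\right\vert(u)>\varepsilon$, defines $T(\lambda)=\sum_{k}\lambda_{k}x_{k}$, observes $T(B_{\ell^{1}})\subseteq[-u,u]$ is almost limited by $\sigma$-Dedekind completeness (Corollary \ref{cor1}), and contradicts limitedness via $\Vert T^{\ast}f_{k}\Vert_{\infty}\geq\vert f_{k}(x_{k})\vert$. The genuine divergence is how the input to this construction is obtained. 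The paper invokes Theorem 6.6 of \cite{WN4}, by which it suffices to prove that the lattice operations of $E^{\ast}$ are weak$^{\ast}$ sequentially continuous; that theorem is exactly the equivalence (via Proposition \ref{order interval}(2)) between ``discrete with order continuous norm'' and ``every order interval is limited'' that you set out to reprove from scratch. It is in that structural claim that your proposal has gaps.

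In the non-order-continuous case, the mechanism you name does not do the work: the Josefson--Nissenzweig functionals live on the lattice copy $J(\ell^{\infty})$, which is a closed sublattice but not an ideal of the band $B_{\tilde{y}}$, so there is no band projection onto it, and the band projection $P:E\rightarrow B_{\tilde{y}}$ says nothing about extending functionals from $J(\ell^{\infty})$ to $E$ while keeping them weak$^{\ast}$ null. What actually saves this case is the injectivity of $\ell^{\infty}$ as a Banach space: extend $J^{-1}$ to a bounded operator $S:E\rightarrow\ell^{\infty}$, put $f_{k}=\varphi_{k}\circ S$; then $f_{k}\overset{w^{\ast}}{\rightarrow}0$ and $\left\vert f_{k}\right\vert(\tilde{y})\geq\sup_{a\in B_{\ell^{\infty}}}\vert f_{k}(Ja)\vert=\Vert\varphi_{k}\Vert=1$ because $J(B_{\ell^{\infty}})\subseteq[-\tilde{y},\tilde{y}]$ --- a verification you assert but never make. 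The order-continuous non-discrete case is a genuine gap. A ``Rademacher-type'' sequence of signs $r_{n}$ with $\left\vert r_{n}\right\vert=u$ is never weakly null in the AM-space $E_{u}\cong C(K)$ (weak convergence there forces pointwise convergence); if you mean weakly null in $E$, the existence of such a sequence does not by itself defeat limitedness of $[-u,u]$: a weakly null sequence contained in a limited set need not be norm null (the unit vectors of $\ell^{\infty}$ form a limited, weakly null, non-norm-null set, by Phillips' lemma), and you never explain how functionals on the non-closed, differently normed principal ideal $E_{u}$ (``its dual'') become weak$^{\ast}$ null functionals on all of $E$ with non-weak$^{\ast}$-null moduli. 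As written, the contrapositive is therefore unproved in this case; the shortest repair is to do what the paper does and quote Theorem 6.6 of \cite{WN4}, after which your argument and the paper's are the same proof.
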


\begin{proof}
$\left( 1\right) \Rightarrow \left( 2\right) $ Follows from Theorem \ref%
{almostlimitedisLW} (3) because every relatively compact subset is limited.

$\left( 2\right) \Rightarrow \left( 3\right) $ Obvious.

$\left( 3\right) \Rightarrow \left( 1\right) $ According to Theorem 6.6 of 
\cite{WN4} it suffices to show that the lattice operations of $E^{\prime }$\
are weak* sequentially continuous.

Indeed, assume by way of contradiction that the lattice operations of $%
E^{\ast }$ are not weak* sequentially continuous, and let $(f_{n})$ be a
sequence in $E^{\ast }$ with $f_{n}\overset{w^{\ast }}{\rightarrow }0$ but $%
\left\vert f_{n}\right\vert \overset{w^{\ast }}{\nrightarrow }0$. So there
is some $x\in E^{+}$ with $\left\vert f_{n}\right\vert (x)\nrightarrow 0$.
By choosing a subsequence we may suppose that there is $\varepsilon >0$ with 
$\left\vert f_{n}\right\vert (x)>\varepsilon $ for all $n\in \mathbb{N}$. In
view of $\left\vert f_{n}\right\vert (x)=\sup \{f_{n}(u):\left\vert
u\right\vert \leq x\}$, for every $n$ there exist some $\left\vert
x_{n}\right\vert \leq x$ such that $2f_{n}(x_{n})\geq \left\vert
f_{n}\right\vert (x)>\varepsilon $. Now, consider the operator $T:\ell
^{1}\rightarrow E$\ defined by%
\begin{equation*}
T\left( (\lambda _{n})\right) =\sum_{n=1}^{\infty }\lambda _{n}x_{n}.
\end{equation*}%
and note that its adjoint is given by $T^{\ast }\left( f\right)
=(f(x_{n}))_{n=1}^{\infty }$ for each $f\in E^{\ast }$. We claim that $T$ is
almost limited. In fact, from%
\begin{equation*}
\left\vert \sum_{n=1}^{m}\lambda _{n}x_{n}\right\vert \leq
\sum_{n=1}^{m}\left\vert \lambda _{n}\right\vert \cdot \left\vert
x_{n}\right\vert \leq \left( \sum_{n=1}^{m}\left\vert \lambda
_{n}\right\vert \right) x\leq x
\end{equation*}%
for every $(\lambda _{n})\in B_{\ell ^{1}}$ and every $m\in \mathbb{N}$, we
see that $T\left( B_{\ell ^{1}}\right) \subseteq \left[ -x,x\right] $. Since 
$E$ is $\sigma $-Dedekind complete then $\left[ -x,x\right] $ (and hence $%
T\left( B_{\ell ^{1}}\right) $ itself) is almost limited. Therefore, by our
hypothesis the operator $T$ limited, and hence $\left\Vert T^{\ast
}(f_{n})\right\Vert _{\infty }\rightarrow 0$, which contradicts $\left\Vert
T^{\ast }(f_{n})\right\Vert _{\infty }\geq \left\vert
f_{n}(x_{n})\right\vert >\frac{\varepsilon }{2}$. This completes the proof
of the theorem.
\end{proof}

The next characterization of the order bounded almost limited operators
between two Banach lattices follows immediately from Theorem \ref%
{T(A)almostlimited}.

\begin{proposition}
\label{almostlimitedop}Let $E$ and $F$ be two Banach lattices such that the
lattice operations of $E^{\ast }$ are sequentially weak$^{\ast }$ continuous
or $F$ has the property $($\textrm{d}$)$. Then for an order bounded operator 
$T:E\rightarrow F$ the following assertions are equivalent:

\begin{enumerate}
\item $T$ is almost limited.

\item $f_{n}\left( T\left( x_{n}\right) \right) \rightarrow 0$ for every
norm bounded disjoint sequence $\left( x_{n}\right) \subset E^{+}$ and every
weak$^{\ast }$ null disjoint sequence $\left( f_{n}\right) \subset F^{\ast }$%
.
\end{enumerate}

If $F$ has the property $($\textrm{d}$)$, we may add:

\begin{enumerate}
\item[$\left( 3\right) $] $f_{n}\left( T\left( x_{n}\right) \right)
\rightarrow 0$ for every norm bounded disjoint sequence $\left( x_{n}\right)
\subset E^{+}$ and every weak$^{\ast }$ null disjoint sequence $\left(
f_{n}\right) \subset \left( F^{\ast }\right) ^{+}$.
\end{enumerate}
\end{proposition}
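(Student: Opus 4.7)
The plan is to apply Theorem \ref{T(A)almostlimited} directly with the choice $A := B_{E}$. The key preliminary observation is that $B_{E}$ is a norm bounded \emph{solid} subset of $E$: if $|x| \leq |y|$ in a Banach lattice then $\|x\| \leq \|y\|$, so $\|y\| \leq 1$ forces $\|x\| \leq 1$. Since the hypothesis of Proposition \ref{almostlimitedop} on $E$ and $F$ is precisely the hypothesis of Theorem \ref{T(A)almostlimited}, the theorem can be invoked with $A = B_{E}$.

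Under this choice, condition (1) of Theorem \ref{T(A)almostlimited} becomes ``$T(B_{E})$ is almost limited'', which is by Definition \ref{def1} exactly the statement that $T$ is almost limited. Thus (1) of the proposition is literally (1) of the theorem.

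To match the conditions (2) and (3), I would bridge the quantifier ``disjoint $(x_{n}) \subset A^{+} = B_{E} \cap E^{+}$'' in Theorem \ref{T(A)almostlimited} with the quantifier ``norm bounded disjoint $(x_{n}) \subset E^{+}$'' in Proposition \ref{almostlimitedop} by a trivial scaling argument: any disjoint sequence in $B_{E}^{+}$ is automatically a norm bounded disjoint sequence in $E^{+}$, and conversely if $(x_{n}) \subset E^{+}$ is disjoint with $\|x_{n}\| \leq M$, then $(x_{n}/M) \subset B_{E}^{+}$ is still disjoint and $f_{n}(T(x_{n}/M)) \to 0$ is equivalent to $f_{n}(T(x_{n})) \to 0$ by linearity. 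The same rescaling applies verbatim to the positive-dual version, yielding the equivalence between condition (3) of the theorem and condition (3) of the proposition in the case where $F$ has the property $(\text{d})$.

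There is essentially no obstacle: once one notes the solidity of $B_{E}$ and sets $A = B_{E}$, Theorem \ref{T(A)almostlimited} does all the work, with the only minor bookkeeping being the scaling to remove the normalization $\|x_{n}\| \leq 1$ in favor of mere norm-boundedness.
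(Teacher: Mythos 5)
Your proposal is correct and matches the paper's intent exactly: the paper derives this proposition as an immediate consequence of Theorem \ref{T(A)almostlimited}, which amounts precisely to taking $A=B_{E}$ (a norm bounded solid set) and rescaling as you do.
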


Let us recall that an operator $T:E\rightarrow X$ is said to be \emph{%
M-weakly compact} if $\left\Vert T\left( x_{n}\right) \right\Vert
\rightarrow 0$ holds for every norm bounded disjoint sequence $\left(
x_{n}\right) $ of $E$.

Note that a M-weakly compact operator need not be almost limited. In fact,
we know that every operator from $\ell ^{\infty }$ into $c_{0}$ is M-weakly
compact and by a result in \cite{Wn}, there exists a non regular operator $%
T:\ell ^{\infty }\rightarrow c_{0}$, which is certainly not compact. Thus by
Theorem \ref{almostlimitedisLW} (3), $T$ is not almost limited.

However, by Proposition \ref{almostlimitedop} we have the following easy
result and omit the proof.

\begin{proposition}
Let $E$ and $F$ be two Banach lattices. If the lattice operations of $%
E^{\ast }$ are sequentially weak$^{\ast }$ continuous or $F$ has the
property $($\textrm{d}$)$, then each order bounded M-weakly-compact operator 
$T:E\rightarrow F$ is almost limited.
\end{proposition}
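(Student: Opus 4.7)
The plan is to deduce the conclusion from Proposition \ref{almostlimitedop} by verifying its second equivalent condition. Under the stated hypothesis on $E^{\ast}$ or $F$, that proposition reduces the problem to showing: for every norm bounded disjoint sequence $(x_{n})\subset E^{+}$ and every weak$^{\ast}$ null disjoint sequence $(f_{n})\subset F^{\ast}$, one has $f_{n}(T(x_{n}))\to 0$.

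First I would fix such sequences $(x_{n})$ and $(f_{n})$. The M-weak compactness of $T$ applies directly to the norm bounded disjoint sequence $(x_{n})\subset E\subset E$ to give $\lVert T(x_{n})\rVert\to 0$. Next, invoking the Banach--Steinhaus theorem on the weak$^{\ast}$ null sequence $(f_{n})$ yields a uniform bound $\lVert f_{n}\rVert\leq M$ for some constant $M>0$. Combining these two facts, the estimate
\begin{equation*}
\lvert f_{n}(T(x_{n}))\rvert \leq \lVert f_{n}\rVert\cdot \lVert T(x_{n})\rVert \leq M\lVert T(x_{n})\rVert
\end{equation*}
forces $f_{n}(T(x_{n}))\to 0$, as required.

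There is no genuine obstacle here. The hypothesis on the lattice operations of $E^{\ast}$ or on property $(\text{d})$ of $F$ enters only in allowing the application of Proposition \ref{almostlimitedop}; no direct use of order continuity or disjointness of $(f_{n})$ is required in the estimate itself, since M-weak compactness already yields norm convergence $T(x_{n})\to 0$ from disjointness of $(x_{n})$ alone.
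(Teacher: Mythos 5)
Your proof is correct and follows exactly the route the paper intends: the paper omits the proof precisely because, as you show, M-weak compactness plus the uniform bound on the weak$^{\ast}$ null sequence immediately verifies condition (2) of Proposition \ref{almostlimitedop}.
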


\end{document}